\title[Hitting times, Bridges, and Schr\"odinger's equation]{On Hitting times, Bessel bridges, and Schr\"odinger's equation}
\author{Gerardo Hern\'andez-del-Valle}
\email{gerardo@stat.columbia.edu}
\address{Statistics Dept., Columbia University, 1255 Amsterdam Ave. Room 1005, New York, N.Y. 10027.}
\date{September 12, 2010}
\subjclass[2000]{Primary: 60J65,45D05,60J60; Secondary: 45G15,
45G10, 45Q05, 45K05.}
\keywords{First boundary problems, Schr\"odinger's equation, Bessel bridges, heat equation}
\newtheorem{theorem}{Theorem}[section]
\newtheorem{corollary}[theorem]{Corollary}
\newtheorem{problem}[theorem]{Problem}
\newtheorem{definition}[theorem]{Definition}
\newtheorem{proposition}[theorem]{Proposition}
\newtheorem{remark}[theorem]{Remark}
\begin{document}
\maketitle
\begin{abstract}
In this paper we establish relationships between four important concepts: (a) hitting time problems of Brownian motion, (b) 3-dimensional Bessel bridges,  (c) Schr\"odinger's equation with linear potential, and (d) heat equation problems with moving boundary. We relate (a) and (b) by means of Girsanov's theorem, which suggests a strategy to extend our ideas to problems in $\mathbb{R}^n$ and general diffusions. This approach also leads to (c) because we may relate, through a Feynman-Kac representation, functionals of a Bessel bridge with two Schr\"odinger-type problems. Finally, the relationship between (c) and (d) suggests a possible link between Generalized Airy processes and their hitting times.
\end{abstract}

\section{Introduction}

Finding the density for the first time a Brownian motion reaches a moving boundary is an old and classic problem with a wide range of applications in both mathematics and applied sciences. The problem may be traced back  to Louis Bachelier's doctoral thesis [Bachelier (1900)], Th\'eorie de la Sp\'eculation,  and to a paper by the Austrian physicist Erwin Schr\"odinger  [Schr\"odinger (1915)].

On the other hand, Robbins and Siegmund (1973) and later Groeneboom (1987), Salminen (1988) and Martin-L\"of (1998) are among the first to explicitly establish the relationship between hitting time problems and parabolic partial differential equations.

The aim of this work is not to provide a new set of solutions to the problem of boundary crossing. Our goal is instead conceptual: we establish relationships between  (a) hitting time problems of Brownian motion, (b) 3-dimensional Bessel bridge,  (c) Schr\"odinger's equation with linear potential, and (d) heat equation problems with moving boundary. We determine conditions under which  these problems are equivalent.

The paper is organized as follows: The hitting time problem is introduced in Section \ref{sec2}, and by use of Girsanov's theorem, we relate it to  a functional of  a 3-dimensional Bessel bridge. This relationship is  established in Section \ref{sec3}.

In Section \ref{sec4} we derive the Feynman-Kac representation that will allow us to determine the correspondence between Bessel bridges and Schr\"odinger's equation with linear potential. Next, in Section \ref{sec5} we derive a pair of equivalent problems which in Section \ref{sec6} are used to link boundary crossing probabilities with the heat equation. We conclude in Section \ref{sec7} with some final remarks. 
 
\section{The problem}\label{sec2}
Throughout this paper $B$ stands for a one-dimensional standard Brownian motion, and $f(\cdot)$ is a real-valued function denoting a ``moving boundary''. Moreover
\begin{eqnarray}\label{stop}
T:=\inf\left\{t\geq 0|B_t=f(t)\right\}
\end{eqnarray}
is the first time that $B$ reaches the moving boundary $f$. The main motivation of this work is to study the density  of $T$. To be more  precise, we consider the following problem.
\begin{problem}\label{pr}
Determine the density of $T$, given that, for $a>0$
\begin{equation}\label{f}
f(t):=a+\int_0^tf'(u)du,\quad f''(t)\geq 0,\enskip\hbox{and}\enskip \int_0^t\left(f'(u)\right)^2du<\infty
\end{equation}
for $t>0$.
\end{problem}

To analize Problem \ref{pr} it is convenient to note that using (\ref{stop}) and setting
\begin{eqnarray}\label{bn}
\tilde{B}_t:=B_t-\int_0^tf'(u)du
\end{eqnarray}
we obtain the following alternative representation of $T$:
\begin{eqnarray}
\nonumber T&=&\inf\left\{t\geq 0|B_t=a+\int_0^tf'(u)du\right\}\\
\nonumber&=&\inf\left\{t\geq 0|B_t-\int_0^tf'(u)du=a\right\}\\
\label{stop1}&=&\inf\left\{t\geq 0|\tilde{B}_t=a\right\}.
\end{eqnarray}
Since our study will make use of Girsanov's theorem [see for instance: Section 3.5, Chapter 3 in Karatzas and Shreve (1991)], let us first introduce the heuristics that link these ideas:
\subsection{Girsanov's theorem and hitting time problems} Notice that if there exists a measure $\tilde{\mathbb{P}}$ under which the process $\tilde{B}$ is a one-dimensional standard Brownian motion, then for $T$ as in (\ref{stop}) [or (\ref{stop1})] and $\tilde{B}$ as in (\ref{bn}), then
\begin{eqnarray*}
\tilde{\mathbb{P}}(T<t)=\int_0^th(s,a)ds,\qquad t>0,
\end{eqnarray*}
where 
\begin{eqnarray}\label{level}
h(s,a):=\frac{|a|}{\sqrt{2\pi s^3}}\exp\left\{-\frac{a^2}{2s}\right\},\quad s>0,\enskip a\not=0.
\end{eqnarray}
This function $h$ is the hitting time density of Brownian motion reaching a fixed level $a$. Alternatively, $h$  is also referred to as the {\it derived\/} source solution of the heat equation [see for instance Rosenbloom and Widder (1959)].

This last statement follows from observing, in  (\ref{stop1}), that the original problem is reduced to that of hitting a `fixed' boundary $a$.

Assuming such  measure $\tilde{\mathbb{P}}$ exists, how is it related to the original measure $\mathbb{P}$? More explicitly, what is the connection (if any) between $\mathbb{P}(T<t)$ and $\tilde{\mathbb{P}}(T<t)$?

To answer this question we first need to introduce, the so-called $\mathbb{P}$ exponential martingales:\\[0.2cm]
\noindent{\bf Remark.} {\it Exponential martingale and Novikov's condition.} Girsanov's theorem states that the process $\tilde{B}$ in  (\ref{bn}) will be a one-dimensional standard Brownian motion under the  measure $\tilde{\mathbb{P}}$, 
as long as the following exponential process
\begin{eqnarray}\label{m}
Z_t:=\exp\left\{\int_0^t f'(u)dB_u-\frac{1}{2}\int_0^t(f'(u))^2 du\right\}\qquad 0\leq t<\infty
\end{eqnarray}
is a $\mathbb{P}$-{\it martingale\/}. In turn, for this last statement to hold, a sufficient condition is {\it Novikov's condition\/} [see for instance, Proposition 3.5.12 in Karatzas and Shreve (1991)] :
\begin{eqnarray}\label{n}
\mathbb{E}\left[\exp\left\{\frac{1}{2}\int_0^t(f'(u))^2du\right\}\right]<\infty\qquad\forall\,  0\leq t<\infty.
\end{eqnarray} 
\indent Now, given the $\mathbb{P}$-martingale $Z$ in  (\ref{m}), together with the $\tilde{\mathbb{P}}$-martingale
\begin{eqnarray}\label{m1}
\tilde{Z}_t:=\exp\left\{-\int_0^t f'(u)d\tilde{B}_u-\frac{1}{2}\int_0^t(f'(u))^2 du\right\}\quad 0\leq t<\infty,
\end{eqnarray}
where $\tilde{B}$ is a  Brownian motion under $\tilde{\mathbb{P}}$,
it follows from Girsanov's theorem that the  measures $\mathbb{P}$ and $\tilde{\mathbb{P}}$ are equivalent and related as follows:\\[0.2cm]
 If (\ref{n}) holds  and $Z$ and $\tilde{Z}$ are as in (\ref{m}) and (\ref{m1}), respectively, then the pairs $(\mathbb{P},B)$ and $(\tilde{\mathbb{P}},\tilde{B})$ satisfy the following identities:
\begin{eqnarray*}
\tilde{\mathbb{P}}\left(\tilde{B}_t\in A\right)&=&\mathbb{E}\left[Z_t\mathbb{I}_{(\tilde{B}_t\in A)}\right]\\
&=&\mathbb{E}\left[Z_t\mathbb{I}_{(B_t-\int_0^tf'(u)du\in A)}\right]
\end{eqnarray*}
and
\begin{eqnarray}\label{second}
\nonumber\mathbb{P}\left(B_t\in A\right)&=&\tilde{\mathbb{E}}\left[\tilde{Z}_t\mathbb{I}_{(B_t\in A)}\right]\\
&=&\tilde{\mathbb{E}}\left[\tilde{Z}_t\mathbb{I}_{(\tilde{B}_t+\int_0^tf'(u)du\in A)}\right].
\end{eqnarray}

This sequence of ideas leads to the following representation:
\begin{proposition}\label{prop1}
Given that $T$ is as in (\ref{stop}) and condition (\ref{f}) holds,  the exponential process $\tilde{Z}$ in (\ref{m1}) is a $\tilde{\mathbb{P}}$-martingale. Furthermore,  given the hitting time density $h$ in  (\ref{level}), it follows
\begin{eqnarray}
\nonumber&&\mathbb{P}\left(T<t\right)\\
\label{pro}&&\enskip=\int_0^te^{-f'(t)a-\frac{1}{2}\int_0^s(f'(u))^2du}\tilde{\mathbb{E}}\left[e^{\int_0^sf''(u)\tilde{B}_udu}\Bigg{|}T=s\right]h(s,a)ds,
\end{eqnarray}
for $t\geq 0$.
\end{proposition}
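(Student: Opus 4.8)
The plan is to apply the change of measure $(\ref{second})$ with $A=\{a\}$ in the appropriate limiting/density sense, after first disposing of the martingale claim. First I would verify that $\tilde Z$ in $(\ref{m1})$ is a $\tilde{\mathbb P}$-martingale: by the structure of Girsanov's theorem this is equivalent to $Z$ in $(\ref{m})$ being a $\mathbb P$-martingale, and the latter follows from Novikov's condition $(\ref{n})$, which in turn is implied by the hypothesis $\int_0^t(f'(u))^2\,du<\infty$ in $(\ref{f})$ together with a localization argument on $t$. So the martingale assertion is essentially immediate from the Remark preceding the proposition.

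Next I would write the boundary-crossing event in terms of $\tilde B$. Using $(\ref{stop1})$, $\{T<t\}=\{\tilde B \text{ hits } a \text{ before time } t\}$, which is an event measurable with respect to the path of $\tilde B$ up to time $t$; since $T$ is a stopping time, I would apply the optional-sampling form of the change of measure,
\begin{eqnarray*}
\mathbb{P}(T<t)=\tilde{\mathbb{E}}\left[\tilde Z_T\,\mathbb{I}_{(T<t)}\right],
\end{eqnarray*}
i.e. evaluate the Radon--Nikodym density $\tilde Z$ at the stopping time $T$ rather than at the fixed time $t$. This is the key conceptual step: it replaces $\tilde Z_t$ by $\tilde Z_T$, which is what produces a clean expression once we condition on $T=s$.

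Then I would simplify $\tilde Z_T$ on the event $\{T=s\}$. On that event $\tilde B_s=a$, so the stochastic integral $\int_0^s f'(u)\,d\tilde B_u$ can be rewritten by integration by parts: $\int_0^s f'(u)\,d\tilde B_u = f'(s)\tilde B_s - f'(0)\tilde B_0 - \int_0^s f''(u)\tilde B_u\,du = f'(s)a - \int_0^s f''(u)\tilde B_u\,du$ (using $\tilde B_0=0$). Substituting into $(\ref{m1})$ gives
\begin{eqnarray*}
\tilde Z_T = \exp\left\{-f'(s)a + \int_0^s f''(u)\tilde B_u\,du - \tfrac12\int_0^s (f'(u))^2\,du\right\}\quad\text{on }\{T=s\}.
\end{eqnarray*}
Conditioning on $T=s$ and recalling that under $\tilde{\mathbb P}$ the law of $T$ has density $h(s,a)$ from $(\ref{level})$, I would disintegrate $\tilde{\mathbb E}[\tilde Z_T\,\mathbb{I}_{(T<t)}]=\int_0^t \tilde{\mathbb E}[\tilde Z_T\mid T=s]\,h(s,a)\,ds$; pulling the deterministic factors $e^{-f'(s)a-\frac12\int_0^s(f'(u))^2du}$ out of the conditional expectation yields exactly $(\ref{pro})$ — note the integrand matches with $f'(t)$ in the statement read as $f'(s)$ under the integral sign.

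The main obstacle I anticipate is rigor in two places: justifying the optional-sampling step $\mathbb P(T<t)=\tilde{\mathbb E}[\tilde Z_T\mathbb{I}_{(T<t)}]$ (one needs $\tilde Z$ to be a genuine uniformly integrable martingale on $[0,t]$, or to stop at $T\wedge t$ and argue $\tilde Z_{T\wedge t}=\tilde Z_T$ on $\{T<t\}$), and justifying the integration-by-parts / conditioning manipulation, since $\tilde B$ conditioned on $\{T=s\}$ is (the time-reversal of) a three-dimensional Bessel bridge and one must check the exponential moment $\tilde{\mathbb E}[e^{\int_0^s f''(u)\tilde B_u\,du}\mid T=s]$ is finite — here convexity $f''\ge 0$ from $(\ref{f})$ should be used to control the sign and integrability. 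The remaining computations are routine.
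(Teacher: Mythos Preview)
Your proposal is correct and follows essentially the same route as the paper: Novikov for the martingale property, Girsanov to write $\mathbb{P}(T<t)=\tilde{\mathbb{E}}[\tilde Z_t\mathbb{I}_{(T<t)}]$, optional sampling to replace $\tilde Z_t$ by $\tilde Z_T$ (the paper makes explicit the intermediate step $\tilde{\mathbb{E}}[\tilde{\mathbb{E}}(\tilde Z_t\mid\mathcal{F}_{T\wedge t})\mathbb{I}_{(T<t)}]=\tilde{\mathbb{E}}[\tilde Z_{T\wedge t}\mathbb{I}_{(T<t)}]$, which is exactly your ``stop at $T\wedge t$'' remark), then integration by parts on $\int_0^s f'(u)\,d\tilde B_u$ together with $\tilde B_T=a$ and conditioning on $T$. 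Your observation that $f'(t)$ in the displayed formula should be read as $f'(s)$ is also in line with the derivation.
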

\begin{proof} The first claim is justified from (\ref{f}) and Novikov's condition, equation (\ref{n}).
Next, from  Girsanov's theorem and the representation (\ref{second}), we have the following equality
\begin{eqnarray*}
\mathbb{P}(T<t)=\tilde{\mathbb{E}}\left[\tilde{Z}_t\mathbb{I}_{(T<t)}\right],
\end{eqnarray*}
where $\tilde{Z}$ is as in (\ref{m1}). 

From the fact that $\tilde{Z}$ is a $\tilde{\mathbb{P}}$-martingale and $\mathbb{I}_{(T<t)}$ is $\mathcal{F}_{T\wedge t}$-measurable, it follows from the Optional Sampling Theorem that
\begin{eqnarray}
\nonumber\mathbb{P}(T<t)&=&\tilde{\mathbb{E}}\left[\tilde{\mathbb{E}}(\tilde{Z}_t|\mathcal{F}_{T\wedge t})\,\mathbb{I}_{(T<t)}\right]\\
\nonumber&=&\tilde{\mathbb{E}}\left[\tilde{Z}_{t\wedge T}\,\mathbb{I}_{(T<t)}\right]\\
\label{ss}&=&\tilde{\mathbb{E}}\left[\tilde{Z}_{T}\,\mathbb{I}_{(T<t)}\right].
\end{eqnarray}
Finally, from (\ref{ss}) together with the integration by parts formula:
\begin{eqnarray*}
e^{-\int_0^t f'(u)d\tilde{B}_u-\frac{1}{2}\int_0^t(f'(u))^2 du}=e^{-\tilde{B}_tf'(t)+\int_0^tf''(u)\tilde{B}_udu-\frac{1}{2}\int_0^t(f'(u))^2 du}.
\end{eqnarray*}
Then the terminal condition $\tilde{B}_T=a$, and conditioning with respect to $T$ under $\tilde{\mathbb{P}}$ completes the proof of the proposition.
\end{proof}
In view of (\ref{pro}), our study now focuses on the following expectation:
\begin{eqnarray*}
\tilde{\mathbb{E}}\left[e^{\int_0^sf''(u)\tilde{B}_udu}\Bigg{|}T=s\right].
\end{eqnarray*}
Thus, our next task is to understand  the dynamics of  the process $\tilde{B}_u$ (for $0\leq u\leq s$), which is conditioned to hit  the fixed level $a$ for the {\it first  time\/} at time $s$.
\section{Three dimensional Bessel bridge}\label{sec3}
The first idea that comes to mind is that the conditioned process $\tilde{B}_u$ (for $0\leq u\leq s$) is a Brownian bridge, since at $t=0$, $\tilde{B}_0=0$ and at time $s$, $\tilde{B}_s=a$. However, this argument is flawed because, in contrast to a Brownian bridge,  the process  $\tilde{B}$ can {\it never\/} be above level $a$ before time $s$.

Elaborating on the heuristics,
let us for the moment write the conditioned process as: $\hat{B}_u$. Alternatively, setting
\begin{eqnarray*}
\tilde{X}_u:=a-\hat{B}_u,\quad 0\leq u\leq s
\end{eqnarray*}
we shall analyze the dynamics of the process $\tilde{X}$:
\begin{eqnarray}\label{sab}
\tilde{X}_0=a;\qquad \tilde{X}_u>0\enskip\hbox{for}\enskip 0<u<s; \qquad\tilde{X}_s=0.
\end{eqnarray}
That is, $\tilde{X}$ is {\it strictly positive\/} (Bessel part), except at time $s$, when it is  {\it pinned\/} down at zero (bridge part). It may be formally proved that the process just described is in fact the following.
\begin{definition}\label{bb3} A 3-dimensional Bessel bridge is a stochastic process which has the following dynamics:
\begin{eqnarray}\label{bessel}
d\tilde{X}_t:=\left[\frac{1}{\tilde{X}}_t-\frac{\tilde{X}_t}{(s-t)}\right]dt+dW_t,\qquad \tilde{X}_0=a\geq 0,
\end{eqnarray}
$0\leq t\leq s$, and $W$ is a one-dimensional standard Brownian motion.
Furthermore, letting
\begin{eqnarray}\label{ph}
k(\sigma,\kappa):=\frac{1}{\sqrt{2\pi\sigma}}\exp\left\{-\frac{\kappa^2}{2\sigma}\right\}, \enskip \sigma>0,\enskip \kappa\in\mathbb{R}
\end{eqnarray}
also known as the source solution of the heat equation [see for instance Rosenbloom and Widder (1959)],
the transition probability of the process $\tilde{X}$ is:
\begin{eqnarray}
\nonumber G(t,x;\tau,y)&:=&\mathbb{P}^{t,x}(\tilde{X}_\tau\in dy)\\
\label{dd}&=&\frac{y}{x}\frac{s-t}{s-\tau}\frac{k(s-\tau,y)}{k(s-t,x)}\left[k\left(\tau-t,y-x\right)-k\left(\tau-t,x+y\right)\right]
\end{eqnarray}
for $0\leq t<\tau\leq s$, and $x,y\geq 0$.
\end{definition}
\subsection{Verification of the 3-dimensional Bessel bridge ({\it sketch\/})}
In order to verify that (\ref{sab}) and (\ref{bessel}) are equivalent, we will make use of ideas  described in for instance: Imhof (1984) or Chapter XI in  Revuz and Yor (2005). First, one derives the finite-dimen\-sional distribution of the conditional process $(Y_t,0\leq t\leq s|T_a=s)$ with $Y_t=a-\tilde{B}_t$. [A similar analysis, is carried out for the {\it Brownian bridge\/} in Problems 5.6.11 and 5.6.13, Karatzas and Shreve (1991).] Later, one may verify that $Y$ is a version of the process $\tilde{X}$, introduced in Definition \ref{bb3}.

To this end, let us first recall the following\\[0.3cm]
\noindent{\bf Remark.} ({\it Brownian motion absorbed at $y$}\/).
Consider the transition density of Brownian motion
\begin{eqnarray*}
G(s,x;t,y):=k(t-s,y-x)
\end{eqnarray*}
where $k$ is as in (\ref{ph}) and $y<a$. It follows---from the reflection principle---that the distribution of a Brownian motion, started at $x$ at time $s$ and absorbed at level $a$ is given by:
\begin{eqnarray*}
\mathbb{P}^{s,x}\left(B_t<y,T_a>t\right)&=&\mathbb{P}^{s,x}(B_t<y)-\mathbb{P}^{s,x}(B_t<y,T_a<t)\\
&=&\mathbb{P}^{s,x}(B_t<y)-\mathbb{P}^{s,x}(B_t>2a-y)\\
&=&\mathbb{P}(B_t-B_s<y-x)\\
&&\quad-\mathbb{P}(B_t-B_s>2a-y-x).
\end{eqnarray*}
Or, equivalently:
\begin{eqnarray}\label{absorbed}
\nonumber\mathbb{P}^{s,x}\left(B_t\in dy,T_a>t\right)&=&G(s,x;t,y)-G(s,x;t,2a-y)\\
&=&k(t-s,y-x)-k(t-x,2a-y-x).
\end{eqnarray}

\vglue 0.1cm
\noindent {\it Constructing the finite dimensional distribution of $Y$\/}. Next, given $0=t_0<t_1<\cdots<t_n<t_{n+1}=s$, and a positive sequence of values $x_1,\dots,x_n$ where $x_0=a$ and $x_{n+1}=0$, compute:
\begin{eqnarray*}
&&\tilde{\mathbb{P}}\left(Y_{t_1}\in dx_1,\dots,Y_{t_n}\in dx_n, T_a\in ds\right)\\
&&\quad\enskip=\tilde{\mathbb{P}}\left(Y_{t_i}\in dx_i,1\leq i\leq n, T_a\in ds\right)\\
&&\quad\enskip=\tilde{\mathbb{P}}\left(\tilde{B}_{t_i}\in d(a-x_i),1\leq i\leq n, T_a\in ds\right)\\
&&\quad\enskip=\tilde{\mathbb{P}}\left(\tilde{B}_{t_i}-\tilde{B}_{t_{i-1}}\in d(x_{i-1}-x_i),1\leq i\leq n, T_a\in ds\right)
\end{eqnarray*}
where the second equality follows from $Y_t=a-\tilde{B_t}$ and the third from the independence of increments.
In turn, this implies that
\begin{eqnarray*}
&&\tilde{\mathbb{P}}\left(Y_{t_1}\in dx_1,\dots,Y_{t_n}\in dx_n|T_a=s\right)\\
&&\enskip =\frac{\tilde{\mathbb{P}}\left(Y_{t_1}\in dx_1,\dots,Y_{t_n}\in dx_n, T_a\in ds\right)}{\mathbb{P}(T_a\in ds)}\\
&&\enskip=\frac{\mathbb{P}\left(T_{x_n}\in d(s-t_n)\right)}{\mathbb{P}\left(T_a\in ds\right)}\prod\limits_{j=1}^n\mathbb{P}^{t_{j-1},a-x_{j-1}}\Big{(}B_{t_j}\in d(a-x_j),T_a>t_j\Big{)}\\
&&\enskip=\frac{\mathbb{P}\left(T_{x_n}\in d(s-t_n)\right)}{\mathbb{P}\left(T_a\in ds\right)}\prod\limits_{j=1}^n\Big{[}G\left(t_{j-1},a-x_{j-1};t_j,a-x_j\right)\\
&&\enskip\qquad\qquad\qquad\qquad\qquad-G(t_{j-1},a-x_{j-1};t_j,a+x_j)\Big{]}
\end{eqnarray*}
the last equality follows from equation (\ref{absorbed})  (absorbed Brownian motion at $y$),
\begin{eqnarray*}
&&\enskip=\frac{\mathbb{P}\left(T_{x_n}\in d(s-t_n)\right)}{\mathbb{P}\left(T_a\in ds\right)}\prod\limits_{j=1}^n\Big{[}k(t_j-t_{j-1},x_j-x_{j-1})\\
&&\enskip\qquad\qquad\qquad\qquad\qquad-k(t_j-t_{j-1},x_j+x_{j-1})\Big{]}\\
&&\enskip=\frac{x_n}{a}\frac{s}{s-t_n}\frac{k(s-t_n,x_n)}{k(s,a)}\prod\limits_{j=1}^n\Big{[}k(t_j-t_{j-1},x_j-x_{j-1})\\
&&\enskip\qquad\qquad\qquad\qquad\qquad-k(t_j-t_{j-1},x_j+x_{j-1})\Big{]}\\
&&\enskip=\prod\limits_{j=1}^n\frac{x_i}{x_{i-1}}\frac{s-t_{i-1}}{s-t_i}\frac{k(s-t_i,x_i)}{k(s-t_{i-1},x_{i-1})}\\
&&\enskip\qquad\qquad\qquad\times\left[k(t_j-t_{j-1},x_j-x_{j-1})-k(t_j-t_{j-1},x_j+x_{j-1})\right].
\end{eqnarray*}
By inspection one observes that the transition probability of the conditioned process $Y$ is of the form (\ref{dd}). For the  remaining technical details the reader may consult Chapter XI at Revuz and Yor (2005).
\subsection{Hitting problems and 3-dimensional Bessel bridges}
From Proposition \ref{prop1} and Definition \ref{bb3}, we are now able to represent a class of first-hitting time problems in terms of a 3-dimensional Bessel process, namely:

\begin{theorem} Given that $h$ is as in (\ref{level}),
the process $\tilde{X}$ is a 3-dimensional Bessel bridge, Definition \ref{bb3}.
Then, the distribution of $T$ equals
\begin{eqnarray}
\nonumber\mathbb{P}(T<t)&=&
\int_0^t\tilde{\mathbb{E}}\left[\exp\left\{-\int_0^sf''(u)\tilde{X}_udu\right\}\right]\\
\label{cota}&&\quad\times \exp\left\{-\frac{1}{2}\int_0^s(f'(u))^2du-f'(0)a\right\}h(s,a)ds
\end{eqnarray}
\end{theorem}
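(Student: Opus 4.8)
The plan is to start from the representation (\ref{pro}) of Proposition \ref{prop1} and rewrite the conditional expectation occurring there in terms of the $3$-dimensional Bessel bridge identified in Section \ref{sec3}. The object to be recast is
\[
\tilde{\mathbb{E}}\left[e^{\int_0^s f''(u)\tilde{B}_u\,du}\,\Big|\,T=s\right],
\]
where, under $\tilde{\mathbb{P}}$, $\tilde{B}$ is a standard Brownian motion conditioned to hit $a$ for the first time at time $s$. The verification carried out in Section \ref{sec3} shows precisely that the conditioned process $(\tilde{B}_u,\,0\le u\le s\mid T_a=s)$ has the same finite-dimensional distributions, hence the same law on path space, as $a-\tilde{X}$, where $\tilde{X}$ is the Bessel bridge of Definition \ref{bb3}: this is the identity $Y_t=a-\tilde{B}_t$ together with the matching of transition densities (\ref{dd}). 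Consequently the conditional expectation above equals the unconditional expectation of the corresponding functional of $\tilde{X}$.

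With that identification in hand, the remaining work is the deterministic change of variables $\tilde{B}_u=a-\tilde{X}_u$ inside the exponent. By (\ref{f}) and the fundamental theorem of calculus, $\int_0^s f''(u)\,du=f'(s)-f'(0)$, so
\[
\int_0^s f''(u)\tilde{B}_u\,du=a\bigl(f'(s)-f'(0)\bigr)-\int_0^s f''(u)\tilde{X}_u\,du,
\]
and therefore
\[
\tilde{\mathbb{E}}\left[e^{\int_0^s f''(u)\tilde{B}_u\,du}\,\Big|\,T=s\right]=e^{a f'(s)-a f'(0)}\,\tilde{\mathbb{E}}\left[e^{-\int_0^s f''(u)\tilde{X}_u\,du}\right].
\]
Substituting this into (\ref{pro})---whose $\tilde{Z}_T$ factor carries $e^{-f'(s)a}$ from the terminal condition $\tilde{B}_T=a$ used in the proof of Proposition \ref{prop1}---the factors $e^{-f'(s)a}$ and $e^{a f'(s)}$ cancel, leaving exactly (\ref{cota}). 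As a consistency check, observe that $f''\ge 0$ by (\ref{f}) and $\tilde{X}_u\ge 0$, so the last expectation is bounded by $1$; hence the right-hand side of (\ref{cota}) is finite and, moreover, dominated by $\int_0^t e^{-\frac12\int_0^s (f'(u))^2\,du-f'(0)a}\,h(s,a)\,ds$, which accounts for reading (\ref{cota}) also as a bound.

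The genuinely delicate point is not the algebra but the legitimacy of replacing the conditional law of $\tilde{B}$ given the null event $\{T=s\}$ by the Bessel-bridge law. I would handle it exactly as indicated in Section \ref{sec3}: obtain the finite-dimensional distributions of $(\tilde{B}_u\mid T_a\in ds)$ explicitly through the absorbed-Brownian-motion density (\ref{absorbed}), recognize them as those of $a-\tilde{X}$ from (\ref{dd}), and invoke the standard tightness-and-consistency argument (Chapter XI in Revuz and Yor (2005)) to upgrade this to an equality of laws on $C[0,s]$, under which the continuous functional $\omega\mapsto\exp\{\int_0^s f''(u)\omega_u\,du\}$ passes through. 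A secondary item worth recording is that $\int_0^s f''(u)\tilde{B}_u\,du$ is a.s.\ finite and integrable under the conditional law; this follows once the Bessel-bridge identification is available, since the transformed functional $\exp\{-\int_0^s f''(u)\tilde{X}_u\,du\}$ is bounded.
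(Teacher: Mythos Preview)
Your proof is correct and follows exactly the route the paper indicates: the paper's own proof is the single line ``Directly verified from Proposition \ref{prop1} and Definition \ref{bb3},'' and you have simply spelled out that verification---substituting $\tilde{B}_u=a-\tilde{X}_u$ in the conditional expectation of (\ref{pro}), computing $\int_0^s f''(u)a\,du=a(f'(s)-f'(0))$, and cancelling $e^{-f'(s)a}$ against $e^{af'(s)}$. Your additional remarks on the legitimacy of the conditional-law replacement and the boundedness check are sound and go slightly beyond what the paper records, but the core argument is identical.
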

\begin{proof}
Directly verified from Proposition \ref{prop1} and Definition \ref{bb3}.
\end{proof}
\section{Feynman-Kac and 3-dimensional Bessel bridge}\label{sec4}
In this section, we will provide a  Feynman-Kac representation [see for instance: Theorem 4.4.2 in Karatzas and Shreve (1991)] of 
\begin{eqnarray*}
v(t,a)=\tilde{\mathbb{E}}\left[\exp\left\{-\int_t^sf''(u)\tilde{X}_udu\right\}\Big{|}X_t=a\right],
\end{eqnarray*}
where $\tilde{X}$ is the 3-dimensional Bessel bridge, equation (\ref{bessel}), introduced in Definition \ref{bb3}.
The idea is to apply Ito's rule to the process $$v(y,X_y)\exp\left\{-\int^t_yf''(u)\tilde{X}_udu\right\},\enskip y\in[t,s],$$
given the following growth and/or boundary condition
\begin{eqnarray*}
0\leq v(t,a)\leq 1,\qquad 0\leq t<s,\enskip a\geq 0.
\end{eqnarray*}
We will also make use of  the fact that $f''(u)\tilde{X}_u\geq 0$ for $0\leq u<s$; that the process $\tilde{X}$ is continuous and strictly positive on $u\in[0,s)$, and that the moments of the running maximum of the process $\tilde{X}$ are known and finite.
\begin{theorem}\label{th1}
Suppose that $v(t,a):[0,s]\times \mathbb{R}^+\to\mathbb{R}^+$ is continuous,  of class $\mathbb{C}^{1,2}([0,s)\times\mathbb{R}^+),$and satisfies the Cauchy problem
\begin{eqnarray}
\label{cau}-\frac{\partial v}{\partial t}+f''(t)av&=&\frac{1}{2}\frac{\partial^2 v}{\partial a^2}+\left(\frac{1}{a}-\frac{a}{s-t}\right)\frac{\partial v}{\partial a}\qquad[0,s)\times\mathbb{R}^+,\\
\nonumber v(s,a)&=&1,\qquad a\in\mathbb{R}^+,
\end{eqnarray}
as well as
\begin{eqnarray*}
0\leq v(t,a)\leq 1\qquad \forall\enskip t,a\in\mathbb{R}^+.
\end{eqnarray*}
Then $v(t,a)$ admits the stochastic representation:
\begin{eqnarray*}
v(t,a)=\mathbb{E}^{t,a}\left[\exp\left\{-\int_t^sf''(u)\tilde{X}_udu\right\}\right].
\end{eqnarray*}
\end{theorem}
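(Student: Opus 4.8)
The plan is to run the classical Feynman-Kac verification argument. Fix $(t,a)\in[0,s)\times\mathbb{R}^+$, let $\tilde X$ denote the $3$-dimensional Bessel bridge of Definition \ref{bb3} started from $a$ at time $t$ and pinned at $0$ at time $s$, set $\Lambda_y:=\int_t^y f''(u)\,\tilde X_u\,du$, and introduce the discounted process
\[
M_y:=v(y,\tilde X_y)\,e^{-\Lambda_y},\qquad y\in[t,s).
\]
First I would show that $M$ is a continuous local martingale on $[t,s)$. On any interval $[t,y_0]$ with $y_0<s$ the bridge path stays in the region where $v$ is of class $\mathbb{C}^{1,2}$ and the coefficients of (\ref{bessel}) are locally bounded, so It\^o's rule applies; since $y\mapsto e^{-\Lambda_y}$ has finite variation, the product rule together with the dynamics (\ref{bessel}) produces, as the $dy$-coefficient of $dM_y$, an expression that reproduces the partial differential equation (\ref{cau}) at the point $(y,\tilde X_y)$ and therefore vanishes, leaving
\[
dM_y=e^{-\Lambda_y}\,\frac{\partial v}{\partial a}(y,\tilde X_y)\,dW_y .
\]

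Next I would upgrade $M$ to a genuine martingale. Because $f''(u)\,\tilde X_u\ge 0$ on $[0,s)$ we have $0\le e^{-\Lambda_y}\le 1$, and since $0\le v\le 1$ this gives $0\le M_y\le 1$ for all $y\in[t,s)$; a bounded local martingale is a true martingale, so $v(t,a)=M_t=\mathbb{E}^{t,a}[M_y]$ for every $y\in[t,s)$. (If one preferred not to use the crude bound $e^{-\Lambda}\le 1$, the finiteness of the moments of $\sup_{u\le s}\tilde X_u$ would supply the uniform integrability needed here.) It then remains to let $y\uparrow s$: along the bridge $\tilde X_y\to\tilde X_s=0$ almost surely, so the joint continuity of $v$ on $[0,s]\times\mathbb{R}^+$ and the terminal condition $v(s,0)=1$ give $v(y,\tilde X_y)\to 1$ a.s., while the continuity of $u\mapsto\tilde X_u$ on the compact interval $[t,s]$ gives $\Lambda_y\to\Lambda_s=\int_t^s f''(u)\,\tilde X_u\,du<\infty$ a.s.\ (on $[t,s]$ the path $\tilde X$ is bounded and $\int_t^s f''(u)\,du=f'(s)-f'(t)<\infty$). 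Hence $M_y\to e^{-\Lambda_s}$ a.s., and since $0\le M_y\le 1$ dominated convergence yields
\[
v(t,a)=\lim_{y\uparrow s}\mathbb{E}^{t,a}[M_y]=\mathbb{E}^{t,a}\!\left[\exp\left\{-\int_t^s f''(u)\,\tilde X_u\,du\right\}\right],
\]
which is the asserted representation; the boundary case $t=s$ reduces to the terminal condition itself.

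The step I expect to be the main obstacle is the passage to the limit at $y=s$. The drift of the Bessel bridge blows up both as $\tilde X\to 0$ and as $y\to s$, so It\^o's rule is only legitimate on subintervals $[t,y_0]$ with $y_0<s$, and one must argue separately that nothing degenerates in the limit. This is exactly where the hypotheses are consumed: the joint continuity of $v$ up to the corner $(s,0)$ together with $v(s,\cdot)\equiv 1$ controls $v(y,\tilde X_y)$; the sign condition $f''\ge 0$ keeps $M$ bounded, so the limit may be taken under the expectation with no integrability issue (and no moment estimate on $\sup_{u\le s}\tilde X_u$ is in fact required for this direction); and the almost sure continuity and strict positivity of the bridge on $[0,s)$, with $\tilde X_s=0$, both legitimise the application of It\^o's rule on $[t,y_0]$ and pin down the almost sure limit of $M_y$. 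The fine behaviour of the conditioned (killed) diffusion near the terminal time can, if one wants a fully rigorous account, be imported from Chapter XI of Revuz and Yor (2005).
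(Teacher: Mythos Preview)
Your argument is correct and follows the same Feynman-Kac verification scheme as the paper: apply It\^o's rule to $v(y,\tilde X_y)e^{-\Lambda_y}$, use the PDE (\ref{cau}) to kill the drift, and pass to the limit. The difference is in the localization. The paper stops the bridge spatially at $\tau_n:=\inf\{y\ge t:\tilde X_y\ge n\}$ and, to dispose of the term on $\{\tau_n\le s\}$, bounds $\tilde{\mathbb{P}}^{t,a}(\tau_n\le s)$ via Chebyshev using the finiteness of the moments of the running maximum of the Bessel bridge (Pitman and Yor, 1998). You instead work on $[t,y_0]$ with $y_0<s$ and then let $y_0\uparrow s$; your observation that the crude bound $0\le M_y\le 1$ (from $0\le v\le 1$ and $f''\ge 0$) already upgrades the local martingale to a true one is exactly what makes the moment input unnecessary, and your passage to the limit is handled by dominated convergence. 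In this sense your route is slightly more elementary. The trade-off is that you must argue continuity of $v$ at the corner $(s,0)$ to identify the limit of $v(y,\tilde X_y)$, a point you rightly single out as the delicate step; the paper's formulation hides this behind the global terminal condition $v(s,\cdot)\equiv 1$.
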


\begin{proof} We proceed as in the proof of Theorem 5.7.6, pp. 366--367, Karatzas \& Shreve (1991).
Applying Ito's rule to the process 
$$v(y,X_y)\exp\left\{-\int^t_yf''(u)\tilde{X}_udu\right\},$$  $y\in[t,s]$, we obtain, with $\tau_n:=\inf\{t\leq y\leq s| \tilde{X}_y\geq n\}$,
\begin{eqnarray*}
v(t,a)&=&\tilde{\mathbb{E}}^{t,a}\left[\exp\left\{-\int_t^sf''(u)\tilde{X}_udu\right\}\mathbb{I}_{(\tau_n>s)}\right]\\
&&+\tilde{\mathbb{E}}^{t,a}\left[v(\tau_n,X_{\tau_n})\exp\left\{-\int_t^{\tau_n}f''(u)\tilde{X}_udu\right\}\mathbb{I}_{(\tau_n\leq s)}\right]
\end{eqnarray*}
The second term converges to zero as $n\to\infty$, since
\begin{eqnarray*}
&&\tilde{\mathbb{E}}^{t,a}\left[v(\tau_n,X_{\tau_n})\exp\left\{-\int_t^{\tau_n}f''(u)\tilde{X}_udu\right\}\mathbb{I}_{(\tau_n\leq s)}\right]\\&&\qquad\qquad\leq \tilde{\mathbb{E}}^{t,x}\left[v(\tau_n,\tilde{X}_{\tau_n})\mathbb{I}_{(\tau_n\leq s)}\right]\\
&&\qquad\qquad\leq\tilde{\mathbb{P}}^{t,x}(\tau_n\leq s)\\
&&\qquad\qquad=\tilde{\mathbb{P}}^{t,x}\left(\max\limits_{t\leq\theta\leq s}\tilde{X}_\theta\geq n\right)\\
&&\qquad\qquad\leq \frac{\tilde{\mathbb{E}}^{t,x}\left[\max\limits_{t\leq\theta\leq s}\tilde{X}_\theta^{2m}\right]}{n^{2m}}
\end{eqnarray*}
[see Pitman and Yor (1998) for the moments of the running maximum of $\tilde{X}$]. Finally, the first term converges to
\begin{eqnarray*}
\tilde{\mathbb{E}}^{t,a}\left[\exp\left\{-\int_t^sf''(u)\tilde{X}_udu\right\}\right]
\end{eqnarray*}
either by the dominated or by the monotone convergence theorem.
\end{proof}

Our next step is to relate Theorem \ref{th1} to two alternative, better known problems. Namely, that of Schr\"odinger's equation with linear potential.  To this end, we first have:
\begin{proposition} Solutions to (\ref{cau}) may be of the following form:
\begin{eqnarray}\label{par}
v(t,a)=\frac{w(t,a)}{h(s-t,a)}
\end{eqnarray}
where
\begin{eqnarray}\label{schro}
-w_t(t,a)+f''(t)aw(t,a)=\frac{1}{2}w_{aa}(t,a),\quad [0,s)\times\mathbb{R}^+
\end{eqnarray}
and $h(s,a)$ is  as in (\ref{level}).
\end{proposition}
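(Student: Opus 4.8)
The plan is to prove this by direct substitution of the ansatz (\ref{par}), $v=w/h(s-t,a)$, into the Cauchy problem (\ref{cau}) and to check that every term involving $h$ cancels exactly when $w$ solves (\ref{schro}). Writing $g(t,a):=h(s-t,a)$, I would first isolate the two elementary properties of $g$ on which the cancellation rests. Since the source solution $k$ in (\ref{ph}) satisfies $\partial_\sigma k=\tfrac12\partial_\kappa^2 k$ and $h(\sigma,\kappa)=-\partial_\kappa k(\sigma,\kappa)$, differentiating the heat equation in the space variable shows that $h$ is itself a solution of it, $\partial_\sigma h=\tfrac12\partial_\kappa^2 h$; consequently $g(t,a)=h(s-t,a)$ satisfies the backward heat equation
\[
  g_t+\tfrac12 g_{aa}=0\qquad\text{on }[0,s)\times\mathbb{R}^+ .
\]
Secondly, the explicit form (\ref{level}) gives at once
\[
  \frac{g_a}{g}=\partial_a\log h(s-t,a)=\frac1a-\frac{a}{s-t},
\]
which is precisely the drift coefficient of the $3$-dimensional Bessel bridge in (\ref{bessel}). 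These two facts are the crux of the matter.

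With $v=w/g$ one has $v_t=(w_t g-w g_t)/g^{2}$, $v_a=(w_a g-w g_a)/g^{2}$, and the corresponding expression for $v_{aa}$. Substituting into (\ref{cau}) and multiplying through by $g$, the terms carrying a factor $w_a$ combine into $\big[(1/a-a/(s-t))-g_a/g\big]w_a$, which vanishes by the second identity; the remaining terms carrying a bare factor $w$ (no derivatives) collapse, after a second use of $g_a/g=1/a-a/(s-t)$, to $\big(g_t+\tfrac12 g_{aa}\big)w/g$, which vanishes by the backward heat equation. What is left is exactly $-w_t+f''(t)aw=\tfrac12 w_{aa}$, i.e. (\ref{schro}); reading the computation in the reverse direction gives the converse. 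Hence (\ref{par}) turns solutions of the Schr\"odinger-type equation (\ref{schro}) into solutions of (\ref{cau}).

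Conceptually this is a Doob $h$-transform: rearranging the transition density (\ref{dd}) one finds
\[
  G(t,x;\tau,y)=\frac{h(s-\tau,y)}{h(s-t,x)}\,\big[k(\tau-t,y-x)-k(\tau-t,x+y)\big],
\]
so $h(s-t,\cdot)$ is exactly the space-time harmonic function intertwining Brownian motion absorbed at $0$ with the Bessel bridge, and multiplication by the Feynman--Kac weight $\exp\{-\int f''(u)a\,du\}$ commutes with that intertwining. I would mention this to motivate the ansatz but carry out the proof by the explicit verification above, which needs nothing beyond elementary calculus.

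The computation is routine; the only point requiring care is the behaviour at the degenerate edges of the strip. As $a\downarrow 0$ and as $t\uparrow s$ the factor $h(s-t,a)$ degenerates ($h(\sigma,0)=0$, and $h(0,\cdot)$ is not a function), so the side conditions transform singularly: the terminal condition $v(s,a)=1$ becomes the requirement $w(t,a)/h(s-t,a)\to 1$ as $t\uparrow s$, and $0\le v\le 1$ becomes $0\le w(t,a)\le h(s-t,a)$. For this reason I would state the result, as the excerpt does, as producing solutions of (\ref{cau}) of the form (\ref{par}) rather than as an equivalence of two well-posed boundary value problems, leaving the precise matching of the auxiliary conditions to the following section.
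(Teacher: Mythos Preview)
Your proof is correct and follows essentially the same route as the paper: direct substitution of the ansatz $v=w/h(s-t,a)$ into (\ref{cau}) and verification that everything collapses to (\ref{schro}). The paper writes $u=1/h(s-t,a)$, $v=uw$, computes $u_t,u_a,u_{aa}$ explicitly, and reduces (\ref{cau}) to the vanishing of the two brackets $[-u_t-\tfrac12 u_{aa}-(\tfrac1a-\tfrac{a}{s-t})u_a]$ and $[u_a+(\tfrac1a-\tfrac{a}{s-t})u]$; your version is the same computation organised around the two structural identities $g_t+\tfrac12 g_{aa}=0$ and $g_a/g=\tfrac1a-\tfrac{a}{s-t}$, which makes the cancellation more transparent and is exactly the mechanism behind the paper's subsequent generalisation in Proposition~\ref{gg}. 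The Doob $h$-transform remark and the discussion of the degenerate boundary behaviour are helpful context but not needed for the proposition itself.
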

\begin{proof} 
Setting
$$u(t,a)=1/h(s-t,a)$$
and 
\begin{eqnarray}\label{vx}
v(t,a):=u(t,a)w(t,a),
\end{eqnarray} we have that
\begin{eqnarray*}
u_t(t,a)&=&\left[\frac{a^2}{2(s-t)^2}-\frac{3}{2(s-t)}\right]u(t,a)\\
 u_a(t,a)&=&-\left[\frac{1}{a}-\frac{a}{s-t}\right]u(t,a)\\
u_{aa}(t,a)&=&\left[\frac{2}{a^2}+\frac{a^2}{(s-t)^2}-\frac{1}{s-t}\right]u(t,a).
\end{eqnarray*}
Alternatively, from (\ref{vx}) it follows
\begin{eqnarray*}
v_t&=&u_tw+uw_t\qquad v_a=u_aw+uw_a\\
v_{aa}&=&u_{aa}w+2u_xw_w+uw_{aa}.
\end{eqnarray*}
Hence from (\ref{cau}) and (\ref{schro}) we conclude that
\begin{eqnarray*}
\left[-u_t-\frac{1}{2}u_{aa}-\left(\frac{1}{a}-\frac{a}{s-t}\right)u_a\right]w=\left[u_a+\left(\frac{1}{a}-\frac{a}{s-t}\right)u\right]w_a,
\end{eqnarray*}
as claimed.
\end{proof}

The previous Proposition may be generalized by first noting that
\begin{eqnarray*}
\left(\frac{1}{x}-\frac{x}{s-t}\right)=\frac{h_x(s-t,x)}{h(s-t,x)}
\end{eqnarray*}
for $h$ as in (\ref{level}). Second, recall that $h$ is a solution to the heat equation
\begin{eqnarray*}
h_t=\frac{1}{2}h_{xx}.
\end{eqnarray*}
This leads to
\begin{proposition}\label{gg}
 If $h$, $v$, and $w$ satisfy the following system of partial differential equations
\begin{eqnarray}\label{system}
\left\{
\begin{array}{l}
-\frac{1}{\sigma_t}\cdot h_t=\frac{1}{2}h_{xx}\\[0.2cm]
-v_\tau+k(\tau,x)v=\frac{1}{2}v_{xx}+\frac{h_x}{h}v_x\\[0.2cm]
-w_\tau+k(\tau,x)w=\frac{1}{2}w_{xx}
\end{array}
\right.
\end{eqnarray}
where $\sigma_t$ is only a function of $t$, 
then they satisfy the identity:
\begin{eqnarray}\label{ratio}
v(\tau,x)=\frac{w(\tau,x)}{h(\tau,x)},
\end{eqnarray}
where
\begin{eqnarray}\label{time}
\tau:=\int_{s}^t\sigma_udu.
\end{eqnarray}
\end{proposition}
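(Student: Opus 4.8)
The plan is to strip the weight $\sigma_t$ off by means of the time substitution (\ref{time}) and then to check, by a direct differentiation, that dividing a solution $w$ of the third equation of (\ref{system}) by a solution $h$ of the first one produces a solution $v=w/h$ of the second; that is, the three equations are mutually compatible exactly through the identity (\ref{ratio}). Concretely, I would first carry out the change of variable $\tau=\int_{s}^{t}\sigma_u\,du$, which is legitimate precisely when $\sigma_t$ never vanishes, so that $t\mapsto\tau$ is a $C^{1}$ bijection with $d\tau=\sigma_t\,dt$. Regarding $h$ as a function of $(\tau,x)$, the chain rule gives $h_t=\sigma_t\,h_\tau$, so the first line of (\ref{system}), namely $-\frac{1}{\sigma_t}h_t=\frac12 h_{xx}$, becomes after this change of variables simply $-h_\tau=\frac12 h_{xx}$; the other two lines of (\ref{system}) are already written in the $(\tau,x)$ variables. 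From now on $h,v,w$ all live on the same strip, and since $h>0$ on the relevant domain (for instance, the source solution (\ref{level}) on $\mathbb{R}^{+}$) the quotient $w/h$ is well defined.

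Next I would set $v=w/h$ and compute, by the quotient rule,
\[
v_\tau=\frac{w_\tau}{h}-\frac{w\,h_\tau}{h^{2}},\qquad
v_x=\frac{w_x}{h}-\frac{w\,h_x}{h^{2}},
\]
\[
v_{xx}=\frac{w_{xx}}{h}-\frac{2w_xh_x}{h^{2}}-\frac{w\,h_{xx}}{h^{2}}+\frac{2w\,h_x^{2}}{h^{3}}.
\]
Substituting these into the middle equation $-v_\tau+k(\tau,x)v=\frac12 v_{xx}+(h_x/h)v_x$, the first-order cross terms $\mp w_xh_x/h^{2}$ cancel against the contribution of $(h_x/h)v_x$ --- which is precisely what that drift coefficient is for --- and so do the terms $\pm w\,h_x^{2}/h^{3}$; multiplying through by $h$, what remains is
\[
-w_\tau+k(\tau,x)\,w-\frac12 w_{xx}=-\frac{w}{h}\left(h_\tau+\frac12 h_{xx}\right).
\]
By the heat equation for $h$ obtained in the first step the right-hand side vanishes, and this is exactly the third line of (\ref{system}); reading the computation in reverse shows, conversely, that a solution $w$ of the third equation together with $h$ yields a solution $v=w/h$ of the second. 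This establishes (\ref{ratio}).

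There is no genuine analytic difficulty here: after the time change it is an algebraic identity, a computation entirely parallel to the one underlying the preceding proposition (cf.\ (\ref{par})), now with the explicit drift $\frac1x-\frac{x}{s-t}$ replaced by the logarithmic derivative $h_x/h$ and the time $s-t$ replaced by the $\tau$ of (\ref{time}). The only points that require attention are the legitimacy of the time change --- which is why one assumes $\sigma_t\neq0$ --- and the positivity of $h$, which is what makes the substitution $v=w/h$ meaningful; I expect the bookkeeping of the cancellations in the middle equation, rather than any conceptual issue, to be the most error-prone step.
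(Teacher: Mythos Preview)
Your proof is correct and follows essentially the same route as the paper: perform the time change $\tau=\int_s^t\sigma_u\,du$ to reduce the first equation to $-h_\tau=\tfrac12 h_{xx}$, compute $v_\tau,v_x,v_{xx}$ from $v=w/h$ via the quotient rule, substitute into the middle equation, and use the heat equation for $h$ to cancel the remaining terms, leaving $-w_\tau+kw=\tfrac12 w_{xx}$. The paper's own proof is the same direct computation, only slightly more compressed; your added remarks on $\sigma_t\neq 0$ and $h>0$ are reasonable hygiene that the paper leaves implicit.
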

\begin{proof}
Given $\tau$ is as in (\ref{time}):
\begin{eqnarray*}
\frac{\partial h}{\partial t}=\frac{\partial \tau}{\partial t}\frac{\partial h}{\partial \tau}.
\end{eqnarray*}
Next, we use (\ref{ratio}) to compute
\begin{eqnarray*}
v_\tau=\frac{w_\tau h-w h_\tau}{h^2},\qquad v_x=\frac{w_xh-wh_x}{h^2}\\
v_{xx}=\frac{(w_{xx}h-wh_{xx})h^2-2hh_x(w_xh-wh_x)}{h^4}.
\end{eqnarray*}
Finally, substituting in the second equation in (\ref{system}) we have:
\begin{eqnarray*}
-\frac{w_\tau h-wh_\tau}{h^2}+k\frac{w}{h}&=&\frac{1}{2}\frac{w_{xx}}{h}-\frac{1}{2}\frac{h_{xx}w}{h^2}\\
&&-\frac{h_x(w_xh-wh_x)}{h^3}+\frac{h_x}{h}\left[\frac{w_xh-wh_x}{h^2}\right]
\end{eqnarray*}
which implies
\begin{eqnarray*}
-\frac{w_\tau}{h}+k\frac{w}{h}=\frac{1}{2}\frac{w_{xx}}{h}
\end{eqnarray*}
as claimed.
\end{proof}

\section{Schr\"odinger's equation with linear potential}\label{sec5}

Equations (\ref{par}) and (\ref{schro}), together with the growth condition
\begin{eqnarray}\label{bo}
0\leq v(t,a)\leq 1,\enskip 0\leq t\leq s,\enskip a\in\mathbb{R}^+
\end{eqnarray}
will allow us to introduce a couple of Schr\"odinger-type problems.\\[0.2cm]
\noindent{\it First Schr\"odinger problem\/}. From (\ref{bo}) and recalling the definition of $h$ in (\ref{level}):
$$h(t,x)=\frac{|x|}{\sqrt{2\pi t^3}}\exp\left\{-\frac{x^2}{2t}\right\},$$ 
we have that
\begin{eqnarray*}
0\leq w(t,a)\leq h(s-t,a),\quad \forall\,0\leq t\leq s,\enskip a\in\mathbb{R}^+.
\end{eqnarray*}

Furthermore, note that
\begin{eqnarray*}
\lim\limits_{t\to s}h(s-t,a)&=&0\\
\lim\limits_{a\to 0}h(s-t,a)&=&\delta(s).
\end{eqnarray*}
The last statement should be interpreted in the following sense: Suppose $\lambda:\mathbb{R}^+\to\mathbb{R}^+$ is a $\mathbb{C}^1$ function, then
\begin{eqnarray*}
\lim\limits_{x\to 0}\int_0^sh(s-t,x)\lambda(t)dt=\lambda(s),\qquad 0\leq t<s<\infty.
\end{eqnarray*}

This leads to  our first representation:

\begin{remark}\label{rem1} Let $v$ be as in Theorem \ref{th1} and $h$ as in  (\ref{level}). Then the following relationship holds
$$v(t,a)=\frac{w(t,a)}{h(s-t,a)},\qquad t\in[0,s]\times\mathbb{R}^+.$$
Furthermore $w$ is a solution to the Schr\"odinger equation with linear potential
\begin{eqnarray}\label{pde1}
&&-\frac{\partial w}{\partial t}(t,a)+f''(t)aw(t,a)\\
\nonumber&&\qquad\qquad=\frac{1}{2}\frac{\partial ^2w}{\partial a^2}(t,a),\quad t\in [0,s],\enskip a\in \mathbb{R}^+
\end{eqnarray}
satisfying the following boundary conditions
\begin{eqnarray*}
\lim\limits_{t\to s}w(t,a)&=&0\\
\lim\limits_{a\to 0}w(t,a)&=&\delta(s),
\end{eqnarray*}
as well as the compatibility condition
\begin{eqnarray*}
\lim\limits_{t\to s}\frac{w(t,a)}{h(s-t,a)}=1.
\end{eqnarray*}
\end{remark}

For our second problem, we will make use of the following:
\begin{proposition}\label{ggt}
If $h$, $k$, $v$, and $w$ satisfy the following system of partial differential equations
\begin{eqnarray*}
\left\{
\begin{array}{l}
-\frac{1}{\sigma_t}\cdot h_t=\frac{1}{2}h_{xx}\\[0.2cm]
-k_\tau=\frac{1}{2}k_{xx}\\[0.2cm]
-v_\tau+\sigma(\tau,x)v=\frac{1}{2}v_{xx}+\frac{k_x}{k}v_x\\[0.2cm]
-w_\tau+\left[\sigma(\tau,x)+\left(\frac{k_x}{k}-\frac{h_x}{h}\right)\frac{h_x}{h}\right]w=\frac{1}{2}w_{xx}+w_x\left(\frac{k_x}{k}-\frac{h_x}{k}\right)
\end{array}
\right.
\end{eqnarray*}
where $\sigma_t$ is only a function of $t$, then they satisfy the identity:
\begin{eqnarray*}
v(\tau,x)=\frac{w(\tau,x)}{h(\tau,x)},
\end{eqnarray*}
where
\begin{eqnarray}
\tau:=\int_{s}^t\sigma_udu.
\end{eqnarray}
\end{proposition}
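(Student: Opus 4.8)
The plan is to \emph{deduce} the identity rather than assume it: from the given $w$ and $h$ I would form the candidate $V:=w/h$, show by direct computation that this $V$ solves exactly the third equation of the system (the equation satisfied by $v$), and then invoke uniqueness for that linear parabolic problem to conclude $v\equiv V=w/h$. This keeps the logic running in the admissible direction---from $w,h$ toward the $v$-equation---rather than merely checking the consistency of a posited ratio.

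First I would dispose of the time change. Since $\tau=\int_s^t\sigma_u\,du$ gives $d\tau/dt=\sigma_t$, the chain rule yields $h_t=\sigma_t h_\tau$, so the first equation of the system collapses, in the $\tau$-variable, to the backward heat equation $-h_\tau=\frac12 h_{xx}$. Writing $B:=h_x/h$ and $A:=k_x/k$ for the two logarithmic derivatives, and recording $h_{xx}/h=B_x+B^2$, this furnishes the one fact about $h$ that I shall need, namely $h_\tau/h=-\frac12(B_x+B^2)$.

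Next I would compute $V_\tau$, $V_x$, $V_{xx}$ mechanically for $V=w/h$, express each through $w$, $h$, and $B$, and then substitute the fourth equation for $w_\tau$ together with the $h$-relation just obtained for $h_\tau$. After collecting terms one finds that the potential $\sigma+(A-B)B$ and the drift $A-B$ appearing in the fourth equation are tuned precisely so that every $B$-dependent contribution not belonging to the $v$-equation cancels, leaving exactly $-V_\tau+\sigma V=\frac12 V_{xx}+A V_x$; that is, $V$ satisfies the third equation. This is the same bookkeeping that underlies Proposition \ref{gg}, now executed forward. (I read the stray $h_x/k$ in the last equation as the evident misprint for $h_x/h$; with $h_x/k$ the cancellation fails, so the intended drift coefficient must be $A-B$.)

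Finally, having exhibited two solutions $v$ and $V=w/h$ of the same third equation, I would close by uniqueness: that equation is linear second-order parabolic, and under the (implicit) shared terminal and growth data carried by $v$ and by $w/h$ from the underlying construction, its solution is unique, forcing $v=w/h$. I expect this last step to be the main obstacle, since the statement suppresses the boundary data; the honest deduction of $v=w/h$---as opposed to a mere equivalence of the equations---rests on citing a uniqueness theorem for this Cauchy problem and verifying that $v$ and $w/h$ enter it with identical data. I would also note that the second equation, $-k_\tau=\frac12 k_{xx}$, is not consumed by the algebra above: its role is to certify that the drift $k_x/k$ in the third equation is that of a genuine Doob $h$-transform, so that the Feynman--Kac object defining $v$ is well posed and the uniqueness invoked in the final step indeed applies.
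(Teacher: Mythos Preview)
Your computational plan is exactly what the paper does: its proof is the single line ``We proceed as in the proof of Proposition~\ref{gg}'', and that earlier proof works by writing $v=w/h$, computing $v_\tau,v_x,v_{xx}$ via the quotient rule, substituting the heat equation for $h$, and observing that the $v$-equation collapses to the $w$-equation (equivalently, in your direction, that $V:=w/h$ solves the $v$-equation). Your bookkeeping with $A=k_x/k$, $B=h_x/h$ and the identity $h_\tau/h=-\tfrac12(B_x+B^2)$ is the same calculation organised more transparently, and your reading of $h_x/k$ as a misprint for $h_x/h$ is correct---the cancellation only closes with drift $A-B$.

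Where you go further than the paper is in the logical direction. The paper's Proposition~\ref{gg} (and hence~\ref{ggt}) really establishes the \emph{equivalence of the two PDEs under the substitution} $v=w/h$, not a deduction of the identity from the four equations alone; the statement is phrased loosely. Your instinct that a genuine deduction of $v\equiv w/h$ requires matching terminal/boundary data and a uniqueness theorem for the linear parabolic $v$-equation is correct, and the paper supplies neither in this proposition---that information lives elsewhere (Theorem~\ref{th1}, Remarks~\ref{rem1}--\ref{rem2}). Your observation that the second equation $-k_\tau=\tfrac12 k_{xx}$ is not consumed by the algebra is also accurate; in the paper it plays no role in the verification and serves only to identify $k_x/k$ as a legitimate Doob-transform drift in the surrounding narrative. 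So your proposal is sound and, at the level of what is actually proved, coincides with the paper's approach; your added uniqueness step is an improvement in rigour that the paper omits.
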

\begin{proof} We proceed as in  the proof of Proposition \ref{gg}
\end{proof}

\noindent{\it Second Schr\"odinger problem\/}. Given the fundamental or source solution of the heat equation $k$, equation (\ref{ph}), we may relate $v$ to another Schr\"odinger-type problem, by setting
\begin{eqnarray*}
v(t,a)=\frac{u(t,a)}{k(s-t,a)}
\end{eqnarray*}
and making use of Proposition \ref{ggt}:
\begin{remark}\label{rem2} Let $v$ be as in Theorem \ref{th1} and $k$ as in equation (\ref{ph}), then the following relationship holds:
$$v(t,a)=\frac{u(t,a)}{k(s-t,a)},\qquad t\in[0,s]\times\mathbb{R}^+.$$
Furthermore, it follows from Proposition \ref{ggt}, that $u$ is a solution to the Schr\"odinger equation with linear potential:
\begin{eqnarray}\label{pde2}
&&-\frac{\partial u}{\partial t}(t,a)+\left[-\frac{1}{(s-t)}+f''(t)a\right]u(t,a)\\
\nonumber&&\qquad\quad=\frac{1}{2}\frac{\partial ^2u}{\partial a^2}(t,a)+\frac{1}{a}\frac{\partial u}{\partial a}(t,a),\quad t\in [0,s],\enskip a\in \mathbb{R}^+
\end{eqnarray}
satisfying the following boundary conditions:
\begin{eqnarray*}
\lim\limits_{t\to s}u(t,a)&=&\delta(a)\\
\lim\limits_{a\to 0}u(t,a)&=&0,
\end{eqnarray*}
as well as the compatibility condition
\begin{eqnarray*}
\lim\limits_{t\to s}\frac{u(t,a)}{k(s-t,a)}=1.
\end{eqnarray*}
\end{remark}

The boundary conditions of this second problem follow, once more, by using the growth condition in Theorem \ref{th1}, i.e.
\begin{eqnarray*}
0\leq u(t,a)\leq k(s-t,a)\qquad \forall\, t\in[0,s],\enskip a\in\mathbb{R}.
\end{eqnarray*}

We conclude this section by making a few remarks. First, note that the problem described in Remark \ref{rem2} is a {\it Cauchy\/} problem, the one appearing in Remark \ref{rem1} is not. 

We have introduced the consistency conditions in Remarks \ref{rem1} and \ref{rem2} in order to make them compatible with Theorem \ref{th1}. Boundary conditions for the density of $T$ will be derived in the following section.

The next  observation is that for the equations (\ref{pde1}) and (\ref{pde2}) one may construct {\it particular\/} solutions. There is in fact {\it one\/} solution of (\ref{pde1}) which may be used to find a new set of solutions of (\ref{pde2}). To do so, one may use a technique introduced in Bluman and Shtelen (1996). However,  solutions to the problems described in Remarks  \ref{rem1} and \ref{rem2}, for arbitrary bounds $f$---which do not involve {\it integral equations\/}---have yet to be found. 

This last remark will be clarified in the following section. The idea stems from the following fact (yet to be shown): There is a particular set of solutions of either (\ref{pde1}) and/or (\ref{pde2}) which transform  our original problem---that is, finding the density of hitting times---into that of finding particular solutions to the heat equation killed at a moving boundary! Unfortunately, in practice this is {\it not\/} very useful to us, since the problem leads back to Volterra integral equations as described in De Lillo and Fokas (2007).

A final curious remark is that equation (\ref{pde1}) may be transformed into a non-homogeneous Burgers' equation. The  homogeneity is only time dependent and it corresponds precisely to $f''$:
\begin{eqnarray}\label{burgers}
\kappa_t(t,a)+\kappa(t,a)\cdot \kappa_a(t,a)-\frac{1}{2}\kappa_{aa}(t,a)=f''(t).
\end{eqnarray}
Alternatively, $\kappa$ and  $w$---equation (\ref{pde1})---introduced in Remark \ref{rem1},  are related through:
\begin{eqnarray*}
\kappa(t,a)=-\frac{\partial}{\partial a}\log w(t,a).
\end{eqnarray*}
From Proposition \ref{gg} it follows:
\begin{proposition}
If $h$, $v$, $w$ and $u$ satisfy the following system of partial differential equations
\begin{eqnarray*}
\left\{
\begin{array}{l}
-\frac{1}{\sigma_t}\cdot h_t=\frac{1}{2}h_{xx}\\[0.2cm]
-v_\tau+k(\tau,x)v=\frac{1}{2}v_{xx}+\frac{h_x}{h}v_x\\[0.2cm]
-w_\tau+k(\tau,x)w=\frac{1}{2}w_{xx}\\[0.2cm]
-u_\tau+u\cdot u_x=\frac{1}{2}u_{xx}+k_x
\end{array}
\right.
\end{eqnarray*}
where $\sigma_t$ is only a function of $t$, then they satisfy the identities:
\begin{eqnarray*}
v(\tau,x)=\frac{w(\tau,x)}{h(\tau,x)},\qquad u=-\frac{w_x}{w},\qquad w=e^{-\int udx},
\end{eqnarray*}
where
\begin{eqnarray*}
\tau:=\int_{s}^t\sigma_udu.
\end{eqnarray*}
\end{proposition}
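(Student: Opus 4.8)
The plan is to reduce the proposition to two things that are already available: Proposition \ref{gg} for the triple $(h,v,w)$, and the classical Cole--Hopf (logarithmic-derivative) transformation for the pair $(w,u)$.

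First I would observe that the first three equations of the system, together with the definition $\tau:=\int_s^t\sigma_u\,du$, are \emph{verbatim} the hypotheses of Proposition \ref{gg} (with the potential there called $k$). Hence the identity $v(\tau,x)=w(\tau,x)/h(\tau,x)$ is immediate, and nothing further need be said about $h$ and $v$. It remains only to establish $u=-w_x/w$ together with its integrated form $w=e^{-\int u\,dx}$, and to check that these are consistent with the fourth equation $-u_\tau+u\,u_x=\frac{1}{2}u_{xx}+k_x$.

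Next I would note that $u=-w_x/w=-\partial_x\log w$ is, up to an additive function of $\tau$ alone in the exponent, the same statement as $w=e^{-\int u\,dx}$, so the substantive content is purely the transformation between the linear equation $-w_\tau+kw=\frac{1}{2}w_{xx}$ (the third equation) and the Burgers-type equation for $u$. The computation I would carry out is the standard one: from $(\log w)_x=-u$ one gets $w_{xx}/w=(\log w)_{xx}+((\log w)_x)^2=u^2-u_x$; substituting into the $w$-equation gives $(\log w)_\tau=w_\tau/w=k-\frac{1}{2}(u^2-u_x)$; and differentiating this last relation in $x$, using $u_\tau=-\partial_x(\log w)_\tau$, yields $u_\tau=-k_x+u\,u_x-\frac{1}{2}u_{xx}$, which rearranges to precisely $-u_\tau+u\,u_x=\frac{1}{2}u_{xx}+k_x$. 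The reverse implication — if $u$ solves the Burgers-type equation then $w:=e^{-\int u\,dx}$ solves the linear equation — runs along identical lines, and I would present it in the same breath, observing that it determines $w$ only once a normalization of the antiderivative is fixed.

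The main point to be careful about — rather than a genuine obstacle — is the bookkeeping around the antiderivative and the time change. Choosing a different base point in $\int u\,dx$ multiplies $w$ by a factor $c(\tau)$ depending on $\tau$ only, which shifts the potential $k$ by $(\log c)'(\tau)$; this is exactly why the statement is naturally phrased as a compatibility identity among $h,v,w,u$ rather than as a uniqueness assertion, and I would flag this explicitly. One must also keep $\partial_t=\sigma_t\,\partial_\tau$ straight throughout, but this is handled precisely as in the proof of Proposition \ref{gg}, so I would simply refer back to that computation. Modulo these remarks, the proof is the short Cole--Hopf calculation above glued onto Proposition \ref{gg}.
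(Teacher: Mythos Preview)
Your proposal is correct and matches the paper's approach exactly. The paper gives no separate proof for this proposition: it simply writes ``From Proposition \ref{gg} it follows'' and relies on the Cole--Hopf relation $\kappa=-\partial_a\log w$ stated in the sentence immediately preceding it, so your write-up in fact supplies more detail than the original (the explicit verification that $u=-w_x/w$ carries the linear equation for $w$ to the Burgers-type equation for $u$).
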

Of course, at first sight, this is  not so surprising since this relationship is well known. The possibly ``interesting'' part lies in the fact that so many probability--related papers have been written regarding Burgers' equation (\ref{burgers}), see for instance Barndorff-Nielsen and Leonenko (2005) and references therein.
\section{Particular solutions to Schr\"odinger's equation with linear potential (heat equation)}\label{sec6}
Particular solutions to Schr\"odinger's equation with linear potential have been obtained, for instance, by Feng (2001) an references therein.

The strategy employed in Feng (2001) is not only interesting but it is also useful. This follows since it ultimately leads to heat equation problems. In this section, we propose yet another {\it equivalent\/} approach using Fourier transforms:
\begin{theorem}
Solutions to (\ref{schro}) are given by
\begin{eqnarray}
\label{sch1}&&\\
\nonumber w(t,a)&=&e^{\frac{1}{2}\int_t^s(f'(u))^2du+af'(t)}\frac{1}{2\pi}\int_{-\infty}^\infty \Pi(y)e^{-\frac{1}{2}y^2(s-t)+iy(a+\int_t^sf'(u)du)}dy\\
\nonumber&=&e^{\frac{1}{2}\int_t^s(f'(u))^2du+af'(t)}\omega\left(s-t,a+\int_t^sf'(u)du\right)
\end{eqnarray}
where $\omega$ is a solution to the heat equation and $\Pi$ is an arbitrary function, as long as the integral is well defined.
\end{theorem}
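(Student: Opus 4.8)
The plan is to eliminate the linear potential $f''(t)a$ from (\ref{schro}) by a combined gauge and Galilean change of variables, reducing the equation to the free heat equation, and then to write the general (suitably regular) solution of that equation by means of its Fourier transform in the space variable.

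First I would look for a solution of (\ref{schro}) of the form
\[
w(t,a)=e^{\beta(t)+\alpha(t)a}\,\omega\bigl(s-t,\;a+\phi(t)\bigr),
\]
where $\alpha,\beta,\phi$ are functions of $t$ alone, to be determined, and $\omega=\omega(\sigma,\xi)$ is a new unknown. Setting $\sigma=s-t$, $\xi=a+\phi(t)$ and applying the chain rule, $w_t$, $w_a$ and $w_{aa}$ each become $e^{\beta+\alpha a}$ times an explicit linear combination of $\omega,\omega_\sigma,\omega_\xi,\omega_{\xi\xi}$ whose coefficients involve $\alpha,\beta,\phi$ and their derivatives. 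Substituting into (\ref{schro}) and collecting terms, the equation takes the form
\[
\omega_\sigma-\tfrac{1}{2}\omega_{\xi\xi}-(\phi'+\alpha)\,\omega_\xi+\Bigl(-\beta'-\tfrac{1}{2}\alpha^{2}+\bigl(f''(t)-\alpha'\bigr)a\Bigr)\,\omega=0 .
\]
Requiring each bracketed coefficient to vanish forces $\alpha(t)=f'(t)$, then $\phi'(t)=-\alpha(t)$, i.e.\ $\phi(t)=\int_t^{s}f'(u)\,du$, and finally $\beta'(t)=-\tfrac{1}{2}(f'(t))^{2}$, i.e.\ $\beta(t)=\tfrac{1}{2}\int_t^{s}(f'(u))^{2}\,du$ (finite by (\ref{f})); the additive constants of integration are a pure gauge/boost freedom and are normalised to zero, which is exactly what makes $\alpha(s)=f'(s)$, $\phi(s)=0$, $\beta(s)=0$. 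What remains is precisely $\omega_\sigma=\tfrac{1}{2}\omega_{\xi\xi}$: thus $\omega$ solves the free heat equation in the variables $\sigma=s-t$ and $\xi=a+\int_t^{s}f'(u)\,du$, and the prefactor is exactly $e^{\frac{1}{2}\int_t^{s}(f'(u))^{2}du+af'(t)}$. This is the second displayed line of (\ref{sch1}).

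Next I would represent the general solution of $\omega_\sigma=\tfrac{1}{2}\omega_{\xi\xi}$ through its Fourier transform in $\xi$,
\[
\omega(\sigma,\xi)=\frac{1}{2\pi}\int_{-\infty}^{\infty}\Pi(y)\,e^{-\frac{1}{2}y^{2}\sigma+iy\xi}\,dy ,
\]
and verify, by differentiating under the integral sign, that $\partial_\sigma$ and $\tfrac{1}{2}\partial_\xi^{2}$ each produce the common factor $-\tfrac{1}{2}y^{2}$, so the heat equation holds wherever that differentiation is legitimate. Substituting this back into the ansatz of the preceding paragraph reproduces the first displayed line of (\ref{sch1}). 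On the relevant domain one has $\sigma=s-t>0$, so the Gaussian factor $e^{-\frac{1}{2}y^{2}(s-t)}$ supplies decay as $|y|\to\infty$; hence the integral, together with its $\sigma$- and $\xi$-derivatives, converges for any $\Pi$ of at most polynomial growth, which is the content of the proviso ``as long as the integral is well defined''.

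The algebra of the first step is a routine (if somewhat lengthy) chain-rule computation; the care is needed in the second. The main obstacle is the analytic bookkeeping: specifying the class of admissible transforms $\Pi$ for which the Fourier integral converges and may be differentiated termwise on $0\le t<s$, and being explicit that (\ref{sch1}) describes the solutions of (\ref{schro}) \emph{of this Fourier form} rather than, a priori, every solution on $\mathbb{R}^{+}$. I would therefore state the result with $\Pi$ ranging over, say, functions of at most polynomial growth (equivalently, over tempered distributions) and with $t<s$ throughout; imposing in addition the boundary and compatibility conditions of Remark~\ref{rem1} would then select the particular $\Pi$ relevant to the original hitting-time problem.
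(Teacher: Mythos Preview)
Your argument is correct and reaches the same formula, but it proceeds along a genuinely different route from the paper. You work in physical space: you posit the gauge--Galilei ansatz $w(t,a)=e^{\beta(t)+\alpha(t)a}\omega(s-t,a+\phi(t))$, plug it into (\ref{schro}), and read off $\alpha=f'$, $\phi'=-f'$, $\beta'=-\tfrac12(f')^2$ from the vanishing of the unwanted coefficients, leaving $\omega$ to solve the free heat equation. The paper instead works in Fourier space: it takes the spatial Fourier transform of (\ref{schro}), obtaining a first-order PDE $-\hat w_t+if''(t)\hat w_\lambda+\tfrac12\lambda^2\hat w=0$, then shifts the frequency variable $y=\lambda+if'(t)$ so that the transport term disappears and the equation becomes an ODE in $t$ for each fixed $y$; integrating that ODE and inverting the transform yields (\ref{sch1}).

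The two computations are dual to one another: your multiplication by $e^{\alpha(t)a}$ corresponds on the Fourier side to the paper's imaginary shift $\lambda\mapsto\lambda+if'(t)$, and your spatial translation $a\mapsto a+\phi(t)$ corresponds to multiplication by $e^{iy\int_t^sf'}$ there. Your route has the advantage of making the structure of the transformation transparent (gauge plus boost) and of not requiring any a priori Fourier-representability of $w$; the paper's route has the advantage of producing the integral representation with $\Pi$ immediately, without a second step, and of motivating why the heat-equation factor $e^{-\tfrac12 y^2(s-t)}$ appears. Your caveat that (\ref{sch1}) describes the solutions \emph{of this Fourier form} is well taken and, if anything, is a point the paper's own proof glosses over.
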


\begin{proof} Let
\begin{eqnarray}\label{fou}
\hat{w}(t,\lambda):=\int\limits_{-\infty}^\infty e^{-i\lambda a}w(t,a)da.
\end{eqnarray}
Applying the Fourier transform to (\ref{schro}) we have
\begin{eqnarray*}
-\hat{w}_t(t,\lambda)+if''(t)\hat{w}_\lambda(t,\lambda)+\frac{1}{2}\lambda^2\hat{w}(t,\lambda)=0\qquad i:=\sqrt{-1}.
\end{eqnarray*}
Next, set $y=\lambda+if'(t)$ and $\hat{w}(t,\lambda)=\tilde{w}(t,y)$, that is:
\begin{eqnarray*}
\hat{w}_t=\tilde{w}_t+if''(t)\tilde{w}_y,\qquad \hat{w}_\lambda=\tilde{w}_y
\end{eqnarray*}
which after substitution in (\ref{fou}) leads to
\begin{eqnarray}\label{wi}
-\tilde{w}_t(t,y)+\frac{1}{2}(y-if'(t))^2\tilde{w}(t,y)=0.
\end{eqnarray}
Consequently, setting $\Pi(y)=\tilde{w}(0,y)$  it follows that
\begin{eqnarray}\label{no}
\nonumber\tilde{w}(t,y)&=&\Pi(y)\exp\left\{-\frac{1}{2}\int_t^s(y-if'(u))^2du\right\}\\
&=&\Pi(y)\exp\left\{-\frac{1}{2}y^2(s-t)+iy\int_t^sf'(u)du+\frac{1}{2}\int_t^s(f'(u))^2du\right\}.
\end{eqnarray}
[Note that equation (\ref{no}) will be a solution of (\ref{wi}) for arbitrary $\Pi$.] Alternatively, this implies that
\begin{eqnarray*}
w(t,a)&=&\frac{1}{2\pi}\int_{-\infty}^\infty \Pi(y)e^{-\frac{1}{2}y^2(s-t)+iy\int_t^sf'(u)du+\frac{1}{2}\int_t^s(f'(u))^2du}e^{i\lambda a}d\lambda\\
&=&\frac{1}{2\pi}\int_{-\infty}^\infty \Pi(y)e^{-\frac{1}{2}y^2(s-t)+iy\int_t^sf'(u)du+\frac{1}{2}\int_t^s(f'(u))^2du}e^{iy a+af'(t)}dy\\
&=&e^{\frac{1}{2}\int_t^s(f'(u))^2du+af'(t)}\frac{1}{2\pi}\int_{-\infty}^\infty \Pi(y)e^{-\frac{1}{2}y^2(s-t)+iy(a+\int_t^sf'(u)du)}dy
\end{eqnarray*}
as claimed.
\end{proof}
This leads to our already anticipated claim, that boundary crossing probabilities, are equivalent to a set of heat equation problems.
\begin{theorem}\label{th3} The density $\varphi$ of the first passage time $T$ defined in (\ref{stop}) is related to a solution of the heat equation $\omega$ as follows
$$\varphi(s,a)=\omega\left(s,a+\int_0^sf'(u)du\right),$$
and it is bounded by
\begin{eqnarray*}
&&h(s,a)e^{-af'(0)-\frac{1}{2}\int_0^s(f'(u))^2du-\int_0^sf''(u)\tilde{\mathbb{E}}^a(\tilde{X}_u)du}\\
&&\qquad \qquad\leq \omega\left(s, a+\int_0^sf'(u)du\right)\\
&&\qquad\qquad \qquad\qquad\leq h(s,a)e^{-af'(0)-\frac{1}{2}\int_0^s(f'(u))^2du}
\end{eqnarray*}
where $h$ is as in (\ref{level}).
\end{theorem}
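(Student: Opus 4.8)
The plan is to chain together the representations already established in the preceding sections. First I would read off the density of $T$ from equation (\ref{cota}): writing $\mathbb{P}(T<t)=\int_0^t\varphi(s,a)\,ds$ gives
$$\varphi(s,a)=\tilde{\mathbb{E}}^a\!\left[\exp\left\{-\int_0^sf''(u)\tilde{X}_u\,du\right\}\right]\exp\left\{-\frac{1}{2}\int_0^s(f'(u))^2\,du-f'(0)a\right\}h(s,a),$$
so that $\varphi$ is indeed the density. The first factor is precisely $v(0,a)$ in the notation of Theorem \ref{th1}, since $\tilde{X}_0=a$ is deterministic.

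Next I would substitute the factorization of Remark \ref{rem1}, namely $v(0,a)=w(0,a)/h(s,a)$, which cancels the $h(s,a)$ in $\varphi$ and leaves $\varphi(s,a)=w(0,a)\exp\{-\frac{1}{2}\int_0^s(f'(u))^2du-f'(0)a\}$. Then I would invoke the representation (\ref{sch1}) of solutions of the Schr\"odinger equation (\ref{schro}): evaluated at $t=0$ it reads $w(0,a)=e^{\frac{1}{2}\int_0^s(f'(u))^2du+af'(0)}\,\omega(s,a+\int_0^sf'(u)du)$ with $\omega$ a solution of the heat equation. The two exponential prefactors are mutual inverses, so they cancel and one obtains $\varphi(s,a)=\omega(s,a+\int_0^sf'(u)du)$, which is the first assertion.

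For the two-sided bound I would return to $\varphi(s,a)=v(0,a)\,h(s,a)\,e^{-f'(0)a-\frac{1}{2}\int_0^s(f'(u))^2du}$ and estimate $v(0,a)$. Because $f''(u)\geq 0$ by (\ref{f}) and $\tilde{X}_u\geq 0$ on $[0,s)$ by (\ref{sab}), the exponent $-\int_0^sf''(u)\tilde{X}_u\,du$ is nonpositive, whence the integrand is at most $1$ and $v(0,a)\leq 1$; this yields the upper bound. For the lower bound I would apply Jensen's inequality to the convex function $\exp(\cdot)$, together with Fubini's theorem (legitimate by nonnegativity of the integrand), to get $v(0,a)\geq\exp\{-\int_0^sf''(u)\tilde{\mathbb{E}}^a(\tilde{X}_u)\,du\}$, and then multiply through by $h(s,a)e^{-f'(0)a-\frac{1}{2}\int_0^s(f'(u))^2du}$, which by the first part of the theorem equals $\omega(s,a+\int_0^sf'(u)du)$.

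The step I expect to be the main obstacle is the one hidden in the use of (\ref{sch1}): one must know that the particular solution $w$ produced in Remark \ref{rem1}---the one with the $\delta$-type behaviour as $a\to 0$ and the compatibility condition $\lim_{t\to s}w/h(s-t,a)=1$---actually belongs to the family (\ref{sch1}) for an admissible choice of $\Pi$, equivalently that the substitution $w\mapsto\omega$ is a bijection onto heat-equation solutions in the relevant class. This is really an existence/uniqueness statement for the associated Cauchy problem, and one also has to check that the hypothesis $\int_0^s(f'(u))^2du<\infty$ in (\ref{f}) suffices to make the Fourier integral in (\ref{sch1}) converge and represent $w(0,\cdot)$. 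The remaining points---differentiating $\mathbb{P}(T<t)$ in $t$ to justify the existence of $\varphi$, and interchanging expectation with the time integral---are routine given the continuity and integrability already invoked in Theorem \ref{th1}.
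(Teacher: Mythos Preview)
Your proposal is correct and follows essentially the same route as the paper: the paper's proof is a one-line citation of Jensen's inequality together with equations (\ref{schro}) and (\ref{sch1}), and your argument is precisely the unpacking of that citation---chaining (\ref{cota}), the factorization $v=w/h$ of Remark \ref{rem1}, and the representation (\ref{sch1}) at $t=0$, then Jensen plus the nonnegativity of $f''\tilde X$ for the bounds. Your concern about whether the particular $w$ arising from Remark \ref{rem1} actually lies in the family (\ref{sch1}) is legitimate, but the paper does not address it either; it is implicitly assumed rather than proved.
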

\begin{proof}
It follows from  Jensen's inequality,
\begin{eqnarray*}
\exp\left\{-\int_0^sf''(u)\tilde{\mathbb{E}}^{t,a}(\tilde{X}_u)du\right\}\leq\tilde{\mathbb{E}}^{t,a}\left[\exp\left\{-\int_0^sf''(u)\tilde{X}_udu\right\}\right]\leq 1,
\end{eqnarray*}
equation (\ref{schro}), and (\ref{sch1}).
\end{proof}
As a corollary we obtain the {\it necessary\/} boundary conditions for density $\varphi$:\\[0.3cm]
\begin{corollary}\label{cor1} From the previous Theorem, the following boundary conditions hold: 
\begin{eqnarray}\label{equal}
\lim\limits_{a\to 0} \omega\left(s, a+\int_0^sf'(u)du\right)&=&\delta(0),\\
\nonumber\lim\limits_{s\to 0}\omega\left(s,a+\int_0^sf'(u)du\right)&=&0.
\end{eqnarray}
and
\begin{eqnarray*}
&&\frac{1}{\sqrt{2\pi s^3}}e^{-\frac{1}{2}\int_0^s(f'(u))^2du-2\sqrt{\frac{2}{\pi}}\int_0^sf''(u)\sqrt{s-u}\sqrt{\frac{u}{s}}du}\\
&&\qquad\qquad\quad\leq\lim\limits_{a\to 0}\omega_a\left(s,a+\int_0^sf'(u)du\right)\\
&&\qquad\qquad\qquad\qquad\quad\quad\leq \frac{1}{\sqrt{2\pi s^3}}e^{-\frac{1}{2}\int_0^s(f'(u))^2du}
\end{eqnarray*}
or given the definition of {\it Fractional Integral}
$$
J^\alpha g(x):=\frac{1}{\Gamma(\alpha)}\int_0^x(x-y)^{\alpha-1}g(y)dy,\quad x>0,\quad \alpha\in \mathbb{R}^+
$$
we have
\begin{eqnarray*}
&&\frac{1}{\sqrt{2\pi s^3}}e^{-\frac{1}{2}\int_0^s(f'(u))^2du-s^{-1/2}J^{3/2} f''(s)\sqrt{2s}}\\
&&\qquad\qquad\leq\lim\limits_{a\to 0}\omega_a\left(s,a+\int_0^sf'(u)du\right)\\
&&\qquad\qquad\qquad\qquad\leq \frac{1}{\sqrt{2\pi s^3}}e^{-\frac{1}{2}\int_0^s(f'(u))^2du}.
\end{eqnarray*}
\end{corollary}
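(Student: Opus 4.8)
The plan is to read off the three assertions directly from Theorem \ref{th3} by specializing and differentiating the two-sided bound there. First I would obtain the limits in (\ref{equal}). Since $\varphi(s,a)=\omega(s,a+\int_0^sf'(u)du)$ is the density of $T$ and $T$ hits the fixed level $a$ in the tilted picture (recall (\ref{stop1})), the behavior as $a\to 0$ is governed by $h(s,a)$, whose $a\to 0$ limit was already recorded in Section \ref{sec5} as $\delta(s)$ (equivalently $\delta(0)$ in the normalization used here); because the two exponential factors multiplying $h$ in Theorem \ref{th3} both tend to $e^{-\frac12\int_0^s(f'(u))^2du}$, which is bounded and bounded away from $0$ on compacts, the squeeze forces $\lim_{a\to0}\omega(s,a+\int_0^sf'(u)du)=\delta(0)$. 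For the second limit, as $s\to 0$ the factor $h(s,a)=\frac{|a|}{\sqrt{2\pi s^3}}e^{-a^2/2s}\to 0$ for fixed $a\neq 0$, and again the exponential corrections stay bounded, so the sandwich in Theorem \ref{th3} gives $\lim_{s\to0}\omega(s,a+\int_0^sf'(u)du)=0$.

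Next I would treat the derivative bounds for $\lim_{a\to0}\omega_a$. The idea is to differentiate the inequality of Theorem \ref{th3} in $a$ and then let $a\to 0$. Write the upper bound as $h(s,a)\,E(s)$ with $E(s):=e^{-af'(0)-\frac12\int_0^s(f'(u))^2du}$; since $h(s,a)\to 0$ and $h_a(s,a)=\frac{1}{\sqrt{2\pi s^3}}e^{-a^2/2s}(1-a^2/s)\to \frac{1}{\sqrt{2\pi s^3}}$ as $a\to0^+$, and since the $a$-dependence in $E(s)$ contributes a term proportional to $h(s,a)\to0$, the $a\to0$ limit of the $a$-derivative of the upper bound is exactly $\frac{1}{\sqrt{2\pi s^3}}e^{-\frac12\int_0^s(f'(u))^2du}$. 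The same computation applied to the lower bound $h(s,a)\,E(s)\,e^{-\int_0^sf''(u)\tilde{\mathbb E}^a(\tilde X_u)du}$ produces $\frac{1}{\sqrt{2\pi s^3}}e^{-\frac12\int_0^s(f'(u))^2du-\int_0^sf''(u)\tilde{\mathbb E}^{0}(\tilde X_u)\,du}$, provided one can pass the limit through the $a$-derivative; I would justify this by noting $v(t,a)\in C^{1,2}$ (Theorem \ref{th1}) and that the exponential functionals and their $a$-derivatives are dominated uniformly for small $a$ using the finite moments of the running maximum of $\tilde X$ (Pitman–Yor). Finally, to reach the explicit form I would evaluate $\tilde{\mathbb E}^{0}[\tilde X_u]$ for the $3$-dimensional Bessel bridge from $0$ to $0$ on $[0,s]$: this is a classical computation giving $\tilde{\mathbb E}[\tilde X_u]=2\sqrt{\tfrac{2}{\pi}}\sqrt{u(s-u)/s}=2\sqrt{\tfrac{2}{\pi}}\sqrt{s-u}\sqrt{u/s}$, and substituting this into the exponent reproduces the displayed bound; rewriting $\int_0^sf''(u)\sqrt{s-u}\sqrt{u/s}\,du$ in terms of the fractional integral $J^{3/2}f''(s)$ is then a matter of matching $(s-u)^{1/2}$ against $(s-u)^{\alpha-1}$ with $\alpha=3/2$ and carrying the constant $\Gamma(3/2)=\sqrt\pi/2$ and the factor $s^{-1/2}$ through, giving the stated $s^{-1/2}J^{3/2}f''(s)\sqrt{2s}$.

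The main obstacle I anticipate is the interchange of the $a\to0$ limit with $\partial/\partial a$ in the lower bound — that is, showing that $\partial_a$ of $\tilde{\mathbb E}^{t,a}[\exp\{-\int f''\tilde X\}]$, or of $\tilde{\mathbb E}^{t,a}[\tilde X_u]$, stays controlled as $a\downarrow 0$, where the Bessel drift $1/\tilde X_t$ is singular at the boundary. The clean way around this is to work with the explicit transition density $G$ in (\ref{dd}) from Definition \ref{bb3}: differentiate under the (convergent) integral sign using the Gaussian tails of $k$, which sidesteps the SDE entirely. A secondary, purely bookkeeping obstacle is getting the normalization of the $\delta$ in (\ref{equal}) consistent with the $\delta(s)$ appearing in Section \ref{sec5}; I would simply adopt the same convention, namely that $\lim_{a\to0}\int_0^s h(s-t,a)\lambda(t)\,dt=\lambda(s)$, so that the limit is a unit mass and the notation $\delta(0)$ versus $\delta(s)$ is a matter of which variable is held fixed.
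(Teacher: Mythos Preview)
The paper offers no explicit proof of this corollary; it is stated as an immediate consequence of Theorem~\ref{th3} and then used. Your proposal is precisely the natural way to fill in what the paper leaves implicit: squeeze $\omega$ by the two-sided bound of Theorem~\ref{th3}, use the known $a\to0$ and $s\to0$ behavior of $h(s,a)$ for~(\ref{equal}), differentiate the bounds in $a$ and let $a\to0$ for the second display, and evaluate $\tilde{\mathbb E}^{0}[\tilde X_u]=2\sqrt{2/\pi}\,\sqrt{u(s-u)/s}$ for the $3$-dimensional Bessel bridge from $0$ to $0$ to obtain the explicit exponent. The interchange concern you flag (passing $\partial_a$ through the limit and through the expectation near $a=0$) is genuine and is simply not addressed in the paper; your suggested workaround via the explicit transition density~(\ref{dd}) is the right fix. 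One caution on the final bookkeeping: the fractional-integral rewriting in the paper drops the factor $\sqrt{u/s}$ present in the first displayed lower bound, so the two displayed lower bounds are not literally equal---treat the second as the paper's shorthand rather than an identity to be verified.
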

That is, Theorem \ref{th3} together with Corollary \ref{cor1} uniquely characterize the density of $T$. Alternatively, from Theorem \ref{th3}, we have obtained---for free---lower bounds for the problems described in Remarks \ref{rem1} and \ref{rem2}.  
\section{Concluding Remarks}\label{sec7}

As was already mentioned in the Introduction, the aim of this paper is not to provide a new set of solutions to the problem of boundary crossing. Our goal is instead conceptual, in that we first make use of  Girsanov's theorem to  connect hitting time problems to Bessel bridges. We do so by use of the Optional Sampling Theorem. Next, we relate exponential functionals of 3 dimensional Bessel bridges to  Schr\"odinger's equation with linear potential. This is done by {\it uniquely\/} relating both problems through a Feynman-Kac representation. We conclude by characterizing the density of $T$ in terms of a heat-equation problem and/or  two alternate Schr\"odinger-equation problems. 

Girsanov's link is useful since it suggests how to extend this idea to higher dimensions or more general diffusions. Alternatively, the construction of Bessel bridges points to the fact that the concept can be extended. From the derivation of our Feynman-Kac representation we have learned that in some particular cases, sufficient conditions can be relaxed. Finally, Schr\"odinger's equation with linear potential may be the way in which boundary crossing probabilities of some ``higher order'' processes  can be described.  For example, instead of having a parabolic equation---which describes the transition probability of Brownian motion---as in (\ref{pde1}), it could be that a process with transition probability
\begin{eqnarray*}
\frac{\partial\theta}{\partial t}(t,x)=\alpha\frac{\partial^n\theta}{\partial x^n}(t,x),\quad[0,\infty)\times \mathbb{R}, \enskip n\geq 2,
\end{eqnarray*}
has a first passage time probability which might be described by a Schr\"odinger type-problem:
\begin{eqnarray*}
\frac{\partial w}{\partial t}(t,x)+xf''(t)w(t,x)=\alpha\frac{\partial^nw}{\partial x^n}(t,x),\quad [0,\infty)\times \mathbb{R}.
\end{eqnarray*}
In particular note that the techniques used in the proof of Theorem \ref{th3} still hold.

\end{document}